%

\documentclass[10pt,a4paper]{article}
\usepackage{amsmath}
\usepackage{amssymb}
\usepackage{amsthm}
\usepackage{amsfonts}
\usepackage{enumerate}
\usepackage{pstricks}
\usepackage{pst-plot}
\usepackage{pst-poly}
\usepackage{graphics} 
\usepackage{graphicx}
\usepackage{url}
\usepackage{booktabs}
\usepackage{verbatim}
\usepackage[margin=1.3in]{geometry}
\frenchspacing



\addtolength{\parskip}{0.5ex}

%
\newcommand{\tluste}[1]{\mbox{\mathversion{bold}$ #1 $}}
\newcommand{\vr}[1]{{{#1}}}

\newcommand{\mace}[1]{{{#1}}}
\newcommand{\mna}[1]{{\mathcal{#1}}}

\newcommand{\omace}[1]{\mbox{$\overline{\mace{#1}}$}} 
\newcommand{\umace}[1]{\mbox{$\underline{\mace{#1}}$}} 
\newcommand{\imace}[1]{\mbox{$\tluste{#1}$}}

\def\Mid#1{{#1^c}}
\def\Rad#1{{#1^\Delta}}
\def\Mag#1{\mathop{\mathrm{mag}}#1}	
\def\comp#1{{\langle{#1}\rangle}}
\newcommand{\ovr}[1]{\mbox{$\overline{\vr{#1}}$}} 
\newcommand{\uvr}[1]{\mbox{$\underline{\vr{#1}}$}}

\newcommand{\onum}[1]{\mbox{$\overline{{#1}}$}} 
\newcommand{\unum}[1]{\mbox{$\underline{{#1}}$}}

\newcommand{\ivr}[1]{\mbox{$\tluste{#1}$}} 

\newcommand{\inum}[1]{\mbox{$\tluste{#1}$}}

\newcommand{\R}[0]{{\mathbb{R}}}

\newcommand{\IR}[0]{{\mathbb{IR}}}

\newcommand{\Ss}[0]{\mbox{\large$\Sigma$}}

\def\ihull{{\mbox{$\square$}}}

\newcommand{\mmid}[0]{;\,}		

\newcommand{\seznam}[1]{{\{1, \ldots, {#1}\}}}


	
\DeclareMathOperator{\diag}{diag}	

\def\nref#1{$(\ref{#1})$}

\newtheorem{theorem}{Theorem}

\newtheorem{proposition}{Proposition}

\theoremstyle{definition}
\newtheorem{alg}{Algorithm}
 
\newtheorem{example}{Example}

\begin{document}

\title{A New Operator and Method for Solving Interval Linear Equations}

\author{
  Milan Hlad\'{i}k\footnote{
Charles University, Faculty  of  Mathematics  and  Physics,
Department of Applied Mathematics, 
Malostransk\'e n\'am.~25, 11800, Prague, Czech Republic, 
e-mail: \texttt{milan.hladik@matfyz.cz}
}
}

\date{\today}
\maketitle

\begin{abstract}
We deal with interval linear systems of equations. We present a new operator, which generalizes the interval Gauss--Seidel method. Also, based on the new operator and properties of the well-known methods, we propose a new algorithm, called the magnitude method. We illustrate by numerical examples that our approach overcomes some classical methods with respect to both time and sharpness of enclosures.

\end{abstract}


\section{Introduction}

We consider a system of linear equation with coefficients varying inside given intervals, and we want to find a guaranteed enclosure for all emerging solutions.
Since determining the best enclosure to the solution set is an NP-hard problem \cite{Fie2006}, the approaches to calculate it may be computationally expensive \cite{Jan1997,Roh1989,Sha1995} in the worst case. 
That is why the research was driven to develop cheap methods for enclosing the solution set, not necessarily optimally.
There are meny methods known; see e.g.\ \cite{Bea1998,Fie2006,HanSen1981,Han1992, MooKea2009,Neu1990,Neu1999,RohRex1998,Rum2010}.
Extensions to parametric interval systems were studied in \cite{Hla2012d,PopHla2013,Rum2010}, among others, and quantified solutions were investigated e.g.\ in \cite{Pop2012,PopHla2013,Sha2002}.

We will use the following interval notation.
An interval matrix $\imace{A}$ is defined as
$$
\imace{A}:=[\umace{A},\omace{A}]=\{A\in\R^{m\times n}\mmid \umace{A}\leq A\leq\omace{A}\},
$$
where $\umace{A},\omace{A}\in\R^{m\times n}$ are given. The center and radius of $\imace{A}$ are respectively defined as
$$
\Mid{A}:=\frac{1}{2}(\umace{A}+\omace{A}),\quad
\Rad{A}:=\frac{1}{2}(\omace{A}-\umace{A}).
$$
The set of all $m$-by-$n$ interval matrices is denoted by $\IR^{m\times n}$. Interval vectors and intervals can be regarded as special interval matrices of sizes $m$-by-$1$ and $1$-by-$1$, respectively.
For a definition of interval arithmetic see e.g. \cite{MooKea2009,Neu1990}.
Extended interval arithmetic with improper intervals of type $[\onum{a},\unum{a}]$, $\onum{a}>\unum{a}$, was discussed e.g.\ in  \cite{Kau1980,Sha2002}. We will use improper intervals only for the simplicity of exposition of interval expressions. For example, $\inum{a}+[b,-b]$, where $b>0$, is a shortage for the interval $[\unum{a}+b,\onum{a}-b]$.

 The magnitude of an $\imace{A}\in\IR^{m\times n}$ is defined as $\Mag(\imace{A}):=\max(|\umace{A}|,|\omace{A}|)$, where $\max(\cdot)$ is understood entrywise. The comparison matrix of $\imace{A}\in\IR^{n\times n}$ is the matrix $\comp{A}\in\R^{n\times n}$ with entries
\begin{align*}
\comp{A}_{ii}&:=\min\{|a|\mmid a\in\inum{a}_{ii}\},\quad i=1,\dots,n,\\
\comp{A}_{ij}&:=-\Mag(\inum{a}_{ij}),\quad i\not=j.
\end{align*}

Consider a system of interval linear equations
$$
\imace{A}x=\ivr{b}
$$
where $\imace{A}\in\IR^{n\times n}$ and $\ivr{b}\in\IR^n$. The corresponding solution set is defined as
$$
\Ss:=\{x\in\R^n\mmid \exists A\in\imace{A}\exists b\in\ivr{b}: Ax=b\}.
$$
The aim is to compute as tight as possible enclosure of $\Ss$, that is, an interval vector $\ivr{x}\in\IR^n$ containing $\Ss$. By  $\ivr{\Ss}:=\ihull\Ss$ we denote the interval hull of $\Ss$, i.e., the smallest interval enclosure of $\Ss$. Thus, enclosing $\Ss$ or $\ivr{\Ss}$ is the same objective.

Throughout the paper, we assume that $\Mid{A}=I_n$, that is, the midpoint of $\imace{A}$ is the identity matrix. This assumption is not without loss of generality, but most of the solvers utilize preconditioning
$$
(R\imace{A})x=R\ivr{b},
$$
where $R$ is the numerically computed inverse of $\Mid{A}$. Thus, the midpoint of $R\imace{A}$ is nearly the identity matrix. To be numerically save, we relax the system to
$$
[I_n-\Mag(I_n-R\imace{A}),I_n+\Mag(I_n-R\imace{A})]x=R\ivr{b}.
$$
Even though preconditioning causes an expansion of the solution set, it is more easy to handle. Since we do not miss any old solution, any enclosure to the preconditioned system is a valid enclosure for the original one as well.

The assumption $\Mid{A}=I_n$ has many consequences. 
The solution set of such interval linear system is bounded if and only if $\rho(\Rad{A})<1$, where $\rho(\Rad{A})$ stands for the spectral radius of $\Rad{A}$. So in the rest of the paper we assume that this is satisfied.

Another nice property of the interval system in question is that the interval hull of the solution set can be determined exactly (up to the numerical accuracy) by calling the Hansen--Bliek--Rohn method \cite{Fie2006,Roh1993}. Ning and Kearfott \cite{Neu1999,NinKea1997} proposed an alternative formula to compute $\inum{\Ss}$. We state it below and use the following notation
\begin{align*}
u&:=\comp{A}^{-1}\Mag(\ivr{b}),\\
d_i&:=(\comp{A}^{-1})_{ii},\quad i=1,\dots,n,\\
\alpha_i&:=\comp{a_{ii}}-1/d_i,\quad i=1,\dots,n.
\end{align*}
Notice also that the comparison matrix $\comp{A}$ can now be expressed as $\comp{A}=I_n-\Rad{A}$.

\begin{theorem}[Ning--Kearfott, 1997]\label{thmNK}
We have
\begin{align}\label{inclNK}
\inum{\Ss}_i=
 \frac{\ivr{b}_i+(u_i/d_i-\Mag(\ivr{b}_i))[-1,1]}
{\inum{a}_{ii}+\alpha_i[-1,1]},
\quad i=1,\dots,n.
\end{align}
\end{theorem}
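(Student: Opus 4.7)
The strategy is to combine the Oettli--Prager characterization of $\Ss$ with the Neumaier--Rohn a priori bound on $|x|$, and then to collapse the $i$-th constraint to a scalar inequality matching the right-hand side of \eqref{inclNK}. Since $\Mid{A}=I_n$, Oettli--Prager reads $\Ss=\{x\in\R^n : |x-\Mid{b}|\leq \Rad{A}|x|+\Rad{b}\}$. Combining with the triangle inequality $|x|-|\Mid{b}|\leq |x-\Mid{b}|$ yields $\comp{A}|x|\leq \Mag(\ivr{b})$, and since $\rho(\Rad{A})<1$ makes $\comp{A}$ an M-matrix with $\comp{A}^{-1}\geq 0$, the slack $z:=\Mag(\ivr{b})-\comp{A}|x|$ is nonnegative and $|x|=u-\comp{A}^{-1}z\leq u$. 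Reading off the $i$-th coordinate gives $d_i z_i\leq u_i-|x_i|$.

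Fix $i$ and use the definition of $z_i$ to express the off-diagonal sum as $\sum_{j\neq i}\Mag(\inum{a}_{ij})|x_j| = z_i-\Mag(\ivr{b}_i)+\comp{a_{ii}}|x_i|$. Bounding $z_i$ by $(u_i-|x_i|)/d_i$ and substituting into the $i$-th Oettli--Prager inequality, and exploiting the $\Mid{A}=I_n$ identity $\Rad{a_{ii}}+\comp{a_{ii}}=1$ together with $\alpha_i=\comp{a_{ii}}-1/d_i$, everything collapses to the scalar condition
$$|x_i-\Mid{b}_i| \;\leq\; (\Rad{a_{ii}}+\alpha_i)|x_i|+u_i/d_i-\Mag(\ivr{b}_i)+\Rad{b}_i.$$
A short case analysis on the signs of $x_i$ and $x_i-\Mid{b}_i$ translates this into $x_i$ belonging to the right-hand side of \eqref{inclNK}, yielding $\inum{\Ss}_i\subseteq$ right-hand side.

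For the reverse inclusion (tightness), each endpoint of the right-hand side must be realized by some $A^{-1}b$ with $A\in\imace{A}$, $b\in\ivr{b}$. The signs in $A-I_n$ and in $b-\Mid{b}$ can be chosen so that both the Oettli--Prager inequality and the bound $d_iz_i\leq u_i-|x_i|$ become equalities at the extremal candidate; the latter forces $(\comp{A}^{-1})_{ij}z_j=0$ for $j\neq i$, pinning $|x|$ down uniquely, and the resulting $A$ is nonsingular by the M-matrix property of $\comp{A}$. The main obstacle is the sign bookkeeping in the collapsed scalar inequality: showing that it yields \emph{both} endpoints of \eqref{inclNK} exactly, rather than a mere enclosure, requires splitting cases on $\sgn(\Mid{b}_i)$, $\sgn(x_i)$, and $\sgn(x_i-\Mid{b}_i)$ and verifying each subcase against the interval-arithmetic division in the denominator.
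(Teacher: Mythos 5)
The paper itself gives no proof of this theorem --- it is imported from Ning--Kearfott \cite{NinKea1997} and Neumaier \cite{Neu1999} --- so there is no in-paper argument to measure you against; your attempt has to stand on its own. Its forward (enclosure) half does: it is essentially Neumaier's derivation and is sound. Oettli--Prager with $\Mid{A}=I_n$, the M-matrix bound $|x|\le u$, the coordinatewise refinement $d_iz_i\le(\comp{A}^{-1}z)_i=u_i-|x_i|$, and the collapse of row $i$ to the scalar inequality you display, which is exactly the Oettli--Prager condition of the one-dimensional system $(\inum{a}_{ii}+\alpha_i[-1,1])\,x_i=\ivr{b}_i+(u_i/d_i-\Mag(\ivr{b}_i))[-1,1]$ and hence equivalent to membership in the right-hand side of \nref{inclNK}. (You should still record that $\alpha_i\ge0$, that the denominator has positive left endpoint $1/d_i$, and that $u_i/d_i\ge|\Mid{b}_i|$ so the numerator is a proper interval; all three follow from $\comp{A}^{-1}=\sum_k(\Rad{A})^k\ge I_n$ and $u_i\ge d_i\Mag(\ivr{b}_i)$.)

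The genuine gap is the reverse inclusion, which is where the entire content of the Hansen--Bliek--Rohn/Ning--Kearfott theorem lives. You assert that the signs in $A-I_n$ and $b-\Mid{b}$ \emph{can be chosen} so that every inequality in your chain becomes an equality, but that is precisely what has to be proved, and the assertion hides real work. Tightness of $d_iz_i\le(\comp{A}^{-1}z)_i$ forces $z_j=0$ for $j\ne i$, i.e.\ $(\comp{A}|x|)_j=\Mag(\ivr{b}_j)$; for such a point the $j$-th Oettli--Prager row reduces to $|x_j-\Mid{b}_j|\le|x_j|-|\Mid{b}_j|$, which by the reverse triangle inequality forces $x_j\Mid{b}_j\ge0$ \emph{and} $|x_j|\ge|\Mid{b}_j|$. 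So for the candidate $|x|=u-z_i\,\comp{A}^{-1}e_i$ you must (i) determine the value $z_i\ge0$ corresponding to each of the two endpoints, (ii) verify $u_j-z_i(\comp{A}^{-1})_{ji}\ge|\Mid{b}_j|$ for all $j\ne i$ so that the remaining $n-1$ Oettli--Prager rows are actually satisfied with those signs --- this is not automatic and is the heart of the optimality proof --- and (iii) check that row $i$ is consistent with the chosen sign of $x_i$, which flips between the two endpoints and is the real source of the case split on $\sgn(\Mid{b}_i)$. None of this is carried out, and your remark that \quo{the resulting $A$ is nonsingular by the M-matrix property} addresses only a side issue. As written, the argument establishes $\inum{\Ss}_i\subseteq{}$ the right-hand side of \nref{inclNK}; to get equality you must exhibit the extremal $x$ explicitly and verify all $n$ Oettli--Prager rows for it, as is done in \cite{Neu1999}.
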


The disadvantage of the Hansen--Bliek--Rohn method is that we have to compute the inverse of $\comp{A}$. Besides this method, there are other procedures to compute a verified enclosure to $\Ss$; see \cite{MooKea2009,Neu1990}. They are usually faster, on account of tightness of the resulting enclosures. We briefly remind two of them, the well known interval Gauss--Seidel and Krawczyk iteration methods. Let $\ivr{x}\supseteq\Ss$ be in initial enclosure of $\Ss$. The Krawczyk method is based on the operator
$$
\ivr{x}\mapsto \ivr{b}+(I_n-\imace{A})\ivr{x},
$$
Denote by $\imace{D}$ the interval diagonal matrix, whose diagonal is the same as that of $\imace{A}$, and $\imace{A}'$ is used for the interval matrix $\imace{A}$ with zero diagonal. The interval Gauss--Seidel operator reads
$$
\ivr{x}\mapsto \imace{D}^{-1}(\ivr{b}-\imace{A}'\ivr{x}).
$$
In fact, this operator is often called the interval Jacobi operator, whereas the  interval Gauss--Seidel one raises by evaluating the above expression row by row and using the already tightened entries of $\ivr{x}$ in the subsequent rows. Anyway, the limit enclosures are the same, so for the sake of simplicity, we will employ this formulation.

By $\ivr{x}^{\textnormal{GS}}$ and $\ivr{x}^{\textnormal{K}}$ we denote the limit enclosures computed by the interval Gauss--Seidel and Krawczyk methods, respectively.
The theorem below adapted from \cite{Neu1990} gives an explicit formulae for the enclosures.

\begin{theorem}
We have
\begin{align*}
\ivr{x}^{\textnormal{GS}}&=\imace{D}^{-1}(\ivr{b}+\Mag(\imace{A}')u[-1,1]),\\
\ivr{x}^{\textnormal{K}}&=\ivr{b}+\Rad{A}u[-1,1].
\end{align*}
Moreover,
\begin{align}\label{eqMag}
u=\Mag(\ivr{\Ss})
=\Mag(\ivr{x}^{\textnormal{GS}})
=\Mag(\ivr{x}^{\textnormal{K}}).
\end{align}
\end{theorem}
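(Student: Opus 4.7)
The plan hinges on the observation that, under $\Mid{A} = I_n$, both $I_n - \imace{A}$ and the off-diagonal part $\imace{A}'$ have zero midpoint, so for any interval vector $\ivr{x}$ the products collapse to symmetric interval vectors: entrywise, $(I_n-\imace{A})\ivr{x} = \Rad{A}\Mag(\ivr{x})[-1,1]$ and $\imace{A}'\ivr{x} = \Mag(\imace{A}')\Mag(\ivr{x})[-1,1]$. This is the engine driving both closed-form expressions.

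For the Krawczyk operator, I would verify directly that $\ivr{x} := \ivr{b}+\Rad{A}u[-1,1]$ is a fixed point: its magnitude equals $\Mag(\ivr{b})+\Rad{A}u = u$ by the defining identity $\comp{A}u = (I_n-\Rad{A})u = \Mag(\ivr{b})$, so reapplying the operator reproduces $\ivr{b}+\Rad{A}\Mag(\ivr{x})[-1,1] = \ivr{x}$. Conversely, any fixed point must satisfy $\Mag(\ivr{x}) = \Mag(\ivr{b})+\Rad{A}\Mag(\ivr{x})$, a linear system with the unique nonnegative solution $u$ since $(I_n-\Rad{A})^{-1}\geq 0$ (from $\rho(\Rad{A})<1$). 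The Gauss-Seidel case runs analogously, with an additional entrywise division by $\inum{a}_{ii}$; using the identity $\Mag(\ivr{w}/\inum{a}_{ii}) = \Mag(\ivr{w})/\Mig(\inum{a}_{ii})$ together with $\Mig(\inum{a}_{ii}) = \comp{A}_{ii}$ (well-defined because $(\Rad{A})_{ii}<1$), the fixed-point equation for magnitudes again reduces to $\comp{A}\Mag(\ivr{x}) = \Mag(\ivr{b})$, giving $\Mag(\ivr{x}^{\textnormal{GS}}) = u$ and hence the stated closed form.

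The identity $u = \Mag(\ivr{\Ss})$ would follow from the Oettli--Prager characterization, which under $\Mid{A}=I_n$ reduces to $|x|\leq\Mag(\ivr{b})+\Rad{A}|x|$, yielding $|x|\leq u$ for every $x\in\Ss$; attainment of each component $u_i$ is then secured by a suitable componentwise choice of signs in $A$ and $b$ that realizes the bound, giving the reverse inequality. The main obstacle I foresee lies not in these algebraic calculations but in the transition from \emph{fixed point} to \emph{limit of the iteration}: I would close this gap by noting that both operators are inclusion-monotone and preserve $\Ss$, so starting from any sufficiently large $\ivr{x}^0\supseteq\Ss$ yields a monotonically decreasing sequence that converges to a fixed point of the operator, and the uniqueness of the magnitude vector $u$ then pins down this limit and matches it with the claimed formula.
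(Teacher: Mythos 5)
The paper itself offers no proof of this theorem --- it is stated as ``adapted from'' Neumaier's book --- so there is no in-paper argument to match yours against. Your proposal is essentially the standard proof and is sound: the key reduction $(I_n-\imace{A})\ivr{x}=\Rad{A}\Mag(\ivr{x})[-1,1]$ and $-\imace{A}'\ivr{x}=\Mag(\imace{A}')\Mag(\ivr{x})[-1,1]$ is correct because both interval matrices have zero midpoint, and the magnitude bookkeeping ($\Mag(\ivr{b}+s[-1,1])=\Mag(\ivr{b})+s$ for $s\ge 0$, division by the positive interval $\inum{a}_{ii}$ dividing the magnitude by $\comp{a_{ii}}$) correctly turns both fixed-point equations into $\comp{A}\Mag(\ivr{x})=\Mag(\ivr{b})$, whose unique solution is $u$.

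Two spots deserve firming up. First, for $u=\Mag(\ivr{\Ss})$ the Oettli--Prager direction $|x|\le u$ is fine, but ``a suitable componentwise choice of signs'' for attainment should be made explicit, since it is the only nontrivial step: take $s=\sgn(\Mid{b})$, $D_s=\diag(s)$, and set $A=I_n-D_s\Rad{A}D_s\in\imace{A}$ and $b=D_s\Mag(\ivr{b})\in\ivr{b}$; then $A(D_su)=D_s(I_n-\Rad{A})u=D_s\comp{A}u=b$, so $D_su\in\Ss$ and all components of $u$ are attained simultaneously. Second, the passage from fixed point to limit is actually simpler than your monotonicity argument suggests, provided the iteration is taken without intersecting with the previous iterate: the magnitudes obey the affine recursion $m^{k+1}=\Mag(\ivr{b})+\Rad{A}m^k$ (resp.\ its Gauss--Seidel analogue), which converges to $u$ from \emph{any} starting point because $\rho(\Rad{A})<1$, and the iterates themselves are determined by their magnitudes after one step; no fixed-point uniqueness or inclusion monotonicity is needed. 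If instead one intersects with the current box at each step (as implementations do), the limit satisfies only $\ivr{x}^*\subseteq\ivr{b}+\Rad{A}u[-1,1]$ in general --- e.g.\ starting from $\ivr{x}^0=\ivr{\Ss}$ the iteration never expands --- so the stated equalities implicitly refer to the un-intersected iteration (or a sufficiently crude initial box); this is worth a sentence but is a convention issue, not a gap in your argument.
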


Property \nref{eqMag}, not stressed enough in the literature, shows an interesting relation between the mentioned methods. In each coordinate, all corresponding enclosures have one endpoint in common (that one with the larger absolute value). Thus, the enclosures differ from one side only (but the difference may be large)

\section{New interval operator}

\begin{theorem}\label{thmOper}
Let $\Ss\subseteq\ivr{x}\in\IR^n$. Then
\begin{align}\label{inclThmOper}
\inum{\Ss}_i\subseteq 
 \frac{\ivr{b}_i-\sum_{j\not=i}\inum{a}_{ij}\inum{x}_j
 +[\gamma_i,-\gamma_i]u_i}
{\inum{a}_{ii}+\gamma_i[-1,1]}
\end{align}
for every $\gamma_i\in[0,\alpha_i]$ and $i=1,\dots,n$.
\end{theorem}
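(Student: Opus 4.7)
The plan is to fix an arbitrary $x \in \Ss$ with realizations $A \in \imace{A}$, $b \in \ivr{b}$ satisfying $Ax=b$, and show that $x_i$ lies in the claimed quotient; then, since the right-hand side is an interval, taking the hull yields the containment for $\inum{\Ss}_i$. For any slack parameter $\delta \in [-1,1]$ I would rearrange the $i$-th equation as
\[
(a_{ii}+\gamma_i\delta)\,x_i \;=\; \bigl(b_i - \sum_{j\neq i}a_{ij}x_j\bigr) + \gamma_i\delta x_i.
\]
The coefficient on the left automatically lies in $\inum{a}_{ii}+\gamma_i[-1,1]$, so it suffices to choose $\delta$ making the right-hand side $y$ fall into $N + [\gamma_i,-\gamma_i]u_i = [\underline{N}+\gamma_i u_i,\overline{N}-\gamma_i u_i]$, where $N := \ivr{b}_i - \sum_{j\neq i}\inum{a}_{ij}\inum{x}_j$; then $x_i = y/(a_{ii}+\gamma_i\delta)$ is a point of the desired quotient.

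Setting $P := b_i - \sum_{j\neq i}a_{ij}x_j$, as $\delta$ sweeps $[-1,1]$ the value $y$ sweeps the interval $[P-\gamma_i|x_i|,\,P+\gamma_i|x_i|]$, so a suitable $\delta$ exists iff $P \in [\underline{N}+\gamma_i(u_i-|x_i|),\,\overline{N}-\gamma_i(u_i-|x_i|)]$. The bounds $|a_{ij}|\leq\Rad{a}_{ij}$ and $|x_j|\leq\Mag(\inum{x}_j)$ (the latter from $\Ss\subseteq\inum{x}$), combined with $\Mag(\inum{x}_j)\geq u_j$, then give by routine estimates $\min\{\overline{N}-P,\,P-\underline{N}\} \geq \sum_{j\neq i}\Rad{a}_{ij}(u_j-|x_j|)$. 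The whole proof thus reduces to the key inequality
\[
\sum_{j\neq i}\Rad{a}_{ij}\mu_j \;\geq\; \alpha_i\mu_i,\qquad \mu := u - |x|\geq 0.
\]

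Establishing this key inequality is the main obstacle; it is where the bound $\gamma_i\leq\alpha_i$ is consumed and the M-matrix structure of $\comp{A}$ is essential. Taking absolute values in $a_{ii}x_i = b_i - \sum_{j\neq i}a_{ij}x_j$ and applying the triangle inequality with $|a_{ii}|\geq \comp{a}_{ii}$ and $|a_{ij}|\leq\Rad{a}_{ij}$ yields $(\comp{A}|x|)_i \leq \Mag(\ivr{b}_i) = (\comp{A}u)_i$, hence $\nu := \comp{A}\mu \geq 0$. The assumption $\rho(\Rad{A})<1$ makes $\comp{A}=I_n-\Rad{A}$ an M-matrix, so $\comp{A}^{-1}\geq 0$, and in particular $\mu_i = (\comp{A}^{-1}\nu)_i \geq d_i\nu_i$, i.e.\ $\nu_i \leq \mu_i/d_i$. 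Substituting $\nu_i = \comp{a}_{ii}\mu_i - \sum_{j\neq i}\Rad{a}_{ij}\mu_j$ and rearranging gives $\sum_{j\neq i}\Rad{a}_{ij}\mu_j \geq (\comp{a}_{ii}-1/d_i)\mu_i = \alpha_i\mu_i \geq \gamma_i\mu_i$. Once this lemma is in place everything else is bookkeeping; the same inequality applied at $x = 0$ also yields $\gamma_i u_i \leq \Rad{N}$, ensuring that the shrunken numerator is a proper interval and the quotient on the right of \nref{inclThmOper} is well defined.
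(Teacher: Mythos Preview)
Your proof is correct but follows a genuinely different route from the paper. The paper invokes the Ning--Kearfott formula \nref{inclNK} (Theorem~\ref{thmNK}) as a black box: it first checks the case $\gamma_i=\alpha_i$ by showing that the Ning--Kearfott numerator $\ivr{b}_i+(u_i/d_i-\Mag(\ivr{b}_i))[-1,1]$ is contained in the new numerator (using $\Mag(\ivr{b})=\comp{A}u$ and $\Mag(\ivr{x})\geq u$), observes that $\gamma_i=0$ is the Gauss--Seidel operator, and then handles $0<\gamma_i<\alpha_i$ by a direct monotonicity argument on the quotient, comparing left endpoints in three sign cases. In contrast, you never touch Theorem~\ref{thmNK}; you take an arbitrary $x\in\Ss$, introduce the slack variable $\delta$, and reduce everything to the pointwise inequality $\sum_{j\neq i}\Rad{a_{ij}}\mu_j\geq\alpha_i\mu_i$ for $\mu=u-|x|$, which you establish from the M-matrix identity $\mu_i=(\comp{A}^{-1}\nu)_i\geq d_i\nu_i$.

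What each approach buys: the paper's argument is shorter once Ning--Kearfott is in hand, and as a by-product it makes transparent that $\gamma_i=\alpha_i$ is the tightest choice (indeed, that for this choice and exact $u,d$ the operator reproduces $\ivr{\Ss}$). Your argument is self-contained --- it does not need the exact-hull formula at all --- and your key lemma $\sum_{j\neq i}\Rad{a_{ij}}\mu_j\geq\alpha_i\mu_i$ whenever $\comp{A}\mu\geq0$ isolates precisely the M-matrix content behind the operator, which may be useful in settings (e.g.\ parametric systems) where a Ning--Kearfott-type closed form is unavailable. One small remark: when you say ``the same inequality applied at $x=0$'' to show the shrunken numerator is proper, what you really use is that the lemma holds for any $\mu\geq0$ with $\comp{A}\mu\geq0$, in particular for $\mu=u$; you do not need $0\in\Ss$.
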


\begin{proof}
Let $i\in\seznam{n}$. First, we prove the statement for $\gamma_i=\alpha_i$. By Theorem~\ref{thmNK}, 
\begin{align*}
\inum{\Ss}_i=
 \frac{\ivr{b}_i+(u_i/d_i-\Mag(\ivr{b}_i))[-1,1]}
{\inum{a}_{ii}+\alpha_i[-1,1]}.
\end{align*}
The denominator is the same as in \nref{inclThmOper}, and it is a positive interval. Thus, it is sufficient to compare the numerators only. We have
\begin{align*}
\ivr{b}_i+(u_i/d_i-\Mag(\ivr{b}_i))[-1,1]
&=\ivr{b}_i+(u_i/d_i-(\comp{A}u)_i)[-1,1]\\
&=\ivr{b}_i+\left(\sum_{j\not=i}\Rad{a_{ij}}u_j
 -(\comp{a_{ii}}-1/d_i)u_i\right)[-1,1]\\
&\subseteq\ivr{b}_i+\left(\sum_{j\not=i}\Rad{a_{ij}}\Mag(\inum{x}_j)
 -\gamma_iu_i\right)[-1,1]\\
&=\ivr{b}_i-\sum_{j\not=i}\inum{a}_{ij}\inum{x}_j
 +[\gamma_i,-\gamma_i]u_i.
\end{align*}
For $\gamma_i=0$, \nref{inclThmOper} reduces to the interval Gauss-Seidel operator. 

Now, we suppose that $0<\gamma_i<\alpha_i$. Denoting $\inum{v}_i:=\ivr{b}_i-\sum_{j\not=i}\Rad{a_{ij}}u_j[-1,1]$, we have to show the inclusion
\begin{align*}
\frac{\inum{v}_i+\alpha_iu_i[1,-1]}{\inum{a}_{ii}+\alpha_i[-1,1]}
\subseteq\frac{\inum{v}_i+\gamma_iu_i[1,-1]}{\inum{a}_{ii}+\gamma_i[-1,1]}.
\end{align*}
We show it by comparing the left endpoints only; the right endpoints are compared accordingly. We distinguish three cases:

1) Let $\unum{v}_i+\gamma_iu_i\geq0$. Then we want to show that 
$$
\frac{\unum{v}_i+\gamma_iu_i}{\onum{a}_{ii}+\gamma_i}\leq
 \frac{\unum{v}_i+\alpha_iu_i}{\onum{a}_{ii}+\alpha_i}.
$$
This is simplified to 
$$
\unum{v}_i(\alpha_i-\gamma_i)\leq\onum{a}_{ii}u_i(\alpha_i-\gamma_i),
$$
or,
$$
\unum{v_i}\leq\onum{a}_{ii}u_i,
$$
which is always true.

2) Let $\unum{v}_i+\gamma_iu_i<0$ and $\unum{v}_i+\alpha_iu_i\geq0$. Then the statement is obvious.

3) Let $\unum{v}_i+\alpha_iu_i<0$. Then we want to show that 
$$
\frac{\unum{v}_i+\gamma_iu_i}{\unum{a}_{ii}-\gamma_i}\leq
 \frac{\unum{v}_i+\alpha_iu_i}{\unum{a}_{ii}-\alpha_i}.
$$
Simplifying to 
$$
-\unum{v}_i(\alpha_i-\gamma_i)\leq\unum{a}_{ii}u_i(\alpha_i-\gamma_i),
$$
or,
$$
-\unum{v}_i\leq\unum{a}_{ii}u_i,
$$
which holds true.
\end{proof}

Obviously, for $\gamma=0$ we get the interval Gauss--Seidel operator, so our operator can be viewed as its generalization.
The proof also shows that the best choice for $\gamma$ is $\gamma=\alpha$.
In order to make the operator \nref{inclThmOper} applicable, we have to compute $u$ and $d$ or their lower bounds. The tighter bounds the better, however, if we spend to much time to calculate almost exact $u$ and $d$, then it makes no sense to use the operator when we can call the Ning--Kearfott formula directly. So, it is preferable to derive cheap and possibly tight lower bounds on $u$ and $d$. We suggest the following ones:

\begin{proposition}\label{propUD}
We have
\begin{align*}
u&\geq
\Mag{(\ivr{b})}+\Rad{A}(\Mag{(\ivr{b})}+\Rad{A}\Mag{(\ivr{b})})),\\
d_i&\geq
 \uvr{d}_i:=
\onum{a}_{ii}/(1-((\Rad{A})^2)_{ii}),\quad i=1,\dots,n.
\end{align*}
\end{proposition}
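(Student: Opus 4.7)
The plan is to expand $\comp{A}^{-1}$ as a Neumann series. Since $\comp{A}=I_n-\Rad{A}$ with $\rho(\Rad{A})<1$, I can write $\comp{A}^{-1}=\sum_{k=0}^{\infty}(\Rad{A})^k$, and crucially every term is entrywise nonnegative because $\Rad{A}\ge 0$. Any lower bound obtained by truncating the series or by discarding nonnegative summands is then automatically valid, and this single observation drives both bounds.

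For the bound on $u=\comp{A}^{-1}\Mag(\ivr{b})$: I would simply keep the first three terms of the series, yielding $u\ge\Mag(\ivr{b})+\Rad{A}\Mag(\ivr{b})+(\Rad{A})^2\Mag(\ivr{b})$. A Horner-style regrouping of the last two terms reproduces exactly the claimed expression. Nothing beyond termwise nonnegativity is needed here.

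For the bound on $d_i=(\comp{A}^{-1})_{ii}$: my key move is the factorization $(I_n-\Rad{A})^{-1}=(I_n+\Rad{A})(I_n-(\Rad{A})^2)^{-1}$, which follows from $(I_n-\Rad{A})(I_n+\Rad{A})=I_n-(\Rad{A})^2$. Taking the $(i,i)$ entry and dropping the off-diagonal contributions of $I_n+\Rad{A}$---legitimate since $(I_n-(\Rad{A})^2)^{-1}$ is entrywise nonnegative by another Neumann series applied using $\rho((\Rad{A})^2)=\rho(\Rad{A})^2<1$---I would get $d_i\ge(1+(\Rad{A})_{ii})\cdot((I_n-(\Rad{A})^2)^{-1})_{ii}$. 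Since $\Mid{A}=I_n$ forces $\onum{a}_{ii}=1+(\Rad{A})_{ii}$, the proof reduces to bounding $((I_n-(\Rad{A})^2)^{-1})_{ii}$ from below by $1/(1-((\Rad{A})^2)_{ii})$.

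The one nontrivial step is this last inequality. Setting $S:=(\Rad{A})^2\ge 0$, I would expand $((I_n-S)^{-1})_{ii}=\sum_{k\ge 0}(S^k)_{ii}$ and compare termwise against the geometric series $\sum_{k\ge 0}S_{ii}^k$, so the whole claim reduces to showing $(S^k)_{ii}\ge S_{ii}^k$ for every $k\ge 0$. This I would prove by induction: in the step $(S^{k+1})_{ii}=\sum_j S_{ij}(S^k)_{ji}\ge S_{ii}(S^k)_{ii}$, valid because every entry of $S^k$ is nonnegative. The main obstacle, I expect, is spotting the factorization that cleanly separates the ``diagonal boost'' $I_n+\Rad{A}$ from the even-power part $(I_n-(\Rad{A})^2)^{-1}$; once that is in place, everything else is routine nonnegativity bookkeeping.
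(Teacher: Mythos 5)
Your proof is correct and follows essentially the same route as the paper's: both bounds come from the Neumann series $\comp{A}^{-1}=\sum_{k\ge0}(\Rad{A})^k$ together with termwise nonnegativity, truncated after $(\Rad{A})^2$ for $u$. For $d_i$, your factorization $(I_n-\Rad{A})^{-1}=(I_n+\Rad{A})(I_n-(\Rad{A})^2)^{-1}$ is just a cleaner packaging of the paper's even/odd grouping of the diagonal series, and your explicit induction $(S^k)_{ii}\ge S_{ii}^k$ supplies precisely the termwise estimates the paper uses implicitly before resumming the geometric series.
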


\begin{proof}
The first part follows from
\begin{align*}
u&={\comp{A}}^{-1}\Mag{(\ivr{b})}
=(I_n-\Rad{A})^{-1}\Mag{(\ivr{b})}
=\left(\sum_{k=0}^\infty(\Rad{A})^k\right)\Mag{(\ivr{b})}\\
&\geq (I_n+\Rad{A}+(\Rad{A})^2)\Mag{(\ivr{b})}
=\Mag{(\ivr{b})}+\Rad{A}(\Mag{(\ivr{b})}+\Rad{A}\Mag{(\ivr{b})})).
\end{align*}
The second part follows from
\begin{align*}
d&=\diag{({\comp{A}}^{-1})}
=\diag{\left(\sum_{k=0}^\infty(\Rad{A})^k\right)},
\end{align*}
whence
\begin{align*}
d_i
&=\sum_{k=0}^\infty((\Rad{A})^k)_{ii}
\geq \onum{a}_{ii}
 +((\Rad{A})^2)_{ii}(1+\Rad{a_{ii}}+((\Rad{A})^2)_{ii}
 +((\Rad{A})^2)_{ii}\Rad{a_{ii}}+((\Rad{A})^2)_{ii}^2+\dots)\\
&=\onum{a}_{ii}+(1+\Rad{a_{ii}})((\Rad{A})^2)_{ii}(1+((\Rad{A})^2)_{ii}+((\Rad{A})^2)_{ii}^2+\dots)\\
&=\onum{a}_{ii}+\onum{a}_{ii}((\Rad{A})^2)_{ii}
 \frac{1}{1-((\Rad{A})^2)_{ii}}
=\frac{\onum{a}_{ii}}{1-((\Rad{A})^2)_{ii}}.
\qedhere
\end{align*}
\end{proof}

Notice that both bounds require computational time $\mna{O}(n^2)$. It particular, the diagonal of $(\Rad{A})^2$ is computable in square time, but the exact diagonal of $(\Rad{A})^3$ would be too costly.
The following result shows that provided we have a tight approximation on $u$, then the above estimation of $d$ is tight enough to ensure that $\gamma\geq0$. Notice that this would not be satisfied in general if we used the simpler estimation $d\geq\diag(\omace{A}+(\Rad{A})^2)$.

\begin{proposition}
We have $\gamma_i:=\comp{a_{ii}}-1/\unum{d}_i\geq0$, $ i=1,\dots,n$.
\end{proposition}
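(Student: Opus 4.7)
The plan is to reduce the inequality $\gamma_i \geq 0$, i.e.\ $\comp{a_{ii}} \geq 1/\unum{d}_i$, to a simple comparison involving $(\Rad{a}_{ii})^2$ versus $((\Rad{A})^2)_{ii}$, and then exploit nonnegativity of the entries of $\Rad{A}$.

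First I would unfold all of the quantities using the assumption $\Mid{A} = I_n$. Since $\rho(\Rad{A}) < 1$, we have $\Rad{a}_{ii} < 1$, so the diagonal interval $\inum{a}_{ii} = [1 - \Rad{a}_{ii},\, 1 + \Rad{a}_{ii}]$ is positive. Consequently
\[
\comp{a_{ii}} = 1 - \Rad{a}_{ii}, \qquad \onum{a}_{ii} = 1 + \Rad{a}_{ii}.
\]
Plugging the formula $\unum{d}_i = \onum{a}_{ii}/(1 - ((\Rad{A})^2)_{ii})$ from Proposition~\ref{propUD}, the desired inequality $\comp{a_{ii}} \geq 1/\unum{d}_i$ becomes
\[
(1 - \Rad{a}_{ii})(1 + \Rad{a}_{ii}) \geq 1 - ((\Rad{A})^2)_{ii},
\]
which simplifies to
\[
((\Rad{A})^2)_{ii} \geq (\Rad{a}_{ii})^2.
\]

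Next I would finish by observing that $\Rad{A}$ has nonnegative entries (radii are nonnegative), so every term in the sum
\[
((\Rad{A})^2)_{ii} \;=\; \sum_{k=1}^n \Rad{a}_{ik}\,\Rad{a}_{ki}
\]
is nonnegative; dropping all summands except $k = i$ yields the lower bound $(\Rad{a}_{ii})^2$, which is exactly what is needed.

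There is no real obstacle here: the whole argument is one algebraic rearrangement plus the trivial monotonicity coming from nonnegative entries. The only thing to be careful about is to justify at the outset that $1 - \Rad{a}_{ii} > 0$, so that $\comp{a_{ii}}$ equals $1 - \Rad{a}_{ii}$ rather than $0$, and that the denominator $1 + \Rad{a}_{ii}$ can be multiplied through without flipping the inequality; both follow from $\rho(\Rad{A}) < 1$, which is a standing assumption of the paper.
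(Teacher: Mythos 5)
Your proposal is correct and follows essentially the same route as the paper: both reduce the claim to the inequality $((\Rad{A})^2)_{ii}\geq(\Rad{a_{ii}})^2$ (which the paper uses implicitly in its one inequality step) and close with the identity $(1-\Rad{a_{ii}})(1+\Rad{a_{ii}})=1-(\Rad{a_{ii}})^2$. Your version merely makes the key nonnegativity argument for the diagonal of $(\Rad{A})^2$ explicit, which is a reasonable presentational choice.
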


\begin{proof}
We can write
\begin{align*}
\gamma_i
&=\comp{a_{ii}}-1/\unum{d}_i
=\comp{a_{ii}}-\frac{1-((\Rad{A})^2)_{ii}}{\onum{a}_{ii}}\\
&\geq\comp{a_{ii}}-\frac{1-(\Rad{a_{ii}})^2}{1+\Rad{a_{ii}}}
=1-\Rad{a_{ii}}-(1-\Rad{a_{ii}})=0.
\qedhere
\end{align*}

\end{proof}

\subsection{Comparison to the interval Gauss--Seidel method}\label{ssCompGS}


Since our operator is a generalization of the interval Gauss--Seidel iteration, it is natural to compare them. 
Let $\ivr{x}$ be an enclosure to $\Ss$, let $i\in\seznam{n}$, and denote by $\hat{u}$ a lower bound estimation on $u$. We compare the results of ours and the interval Gauss--Seidel operators, that is,
\begin{align*}
\frac{\ivr{b}_i-\sum_{j\not=i}\inum{a}_{ij}\inum{x}_j
 +[\gamma_i,-\gamma_i]\hat{u}_i}
{\inum{a}_{ii}+\gamma_i[-1,1]}\quad \mbox{and}\quad  
\frac{\ivr{b}_i-\sum_{j\not=i}\inum{a}_{ij}\inum{x}_j}{\inum{a}_{ii}}.
\end{align*}
If $\gamma_i=0$, then both intervals coincide, so let us assume that $\gamma_i>0$.
Denote $\inum{v}_i:=\ivr{b}_i-\sum_{j\not=i}\inum{a}_{ij}\inum{x}_j$. We compare the left endpoints of the intervals
\begin{align*}
\frac{\inum{v}_i +[\gamma_i,-\gamma_i]\hat{u}_i}
{\inum{a}_{ii}+\gamma_i[-1,1]}\quad \mbox{and}\quad  
\frac{\inum{v}_i}{\inum{a}_{ii}},
\end{align*}
the right endpoints are compared accordingly. We distinguish three cases:

1) Let $\unum{v}_i\geq0$. Then we want to show that 
$$
\frac{\unum{v}_i}{\onum{a}_{ii}}\leq
 \frac{\unum{v}_i+\gamma_i\hat{u}_i}{\onum{a}_{ii}+\gamma_i}.
$$
This is simplified to 
$$
\unum{v}_i\gamma_i\leq \onum{a}_{ii}\hat{u}_i\gamma_i,
$$
or,
$$
\unum{v_i}\leq \onum{a}_{ii}\hat{u}_i.
$$
If $\hat{u}_i=u_i$, or $\hat{u}_i$ is not far from $u_i$, then the inequality holds true.

2) Let $\unum{v}_i<0$ and $\unum{v}_i+\gamma_iu_i\geq0$. Then the inequality is obviously satisfied.

3) Let $\unum{v}_i+\gamma_iu_i<0$. Then we want to show that 
$$
\frac{\unum{v}_i}{\unum{a}_{ii}}\leq
 \frac{\unum{v_i}+\gamma_iu_i}{\unum{a}_{ii}-\gamma_i}.
$$
This is simplified to 
$$
-\unum{v}_i\gamma_i\leq \unum{a}_{ii}u_i\gamma_i.
$$
or,
$$
-\unum{v}_i\leq \unum{a}_{ii}\hat{u}_i.
$$
This is true provided both $\hat{u}_i$ and $\unum{v}_i$ are tight enough.

The above discussion indicates that our operator with $\gamma_i>0$ is effective only if $\ivr{x}$ is sufficiently tight and the reduction of the enclosure is valid from the smaller side (in the absolute value sense) only. Since $\inum{a}_{ij}$, $i\not=j$, are symmetric intervals, the reduction in the smaller sides of $\inum{x}_i$s makes no improvement in the next iterations. The only influence is by the size of $\Mag(\ivr{x})$ since 
$$
\sum_{j\not=i}\inum{a}_{ij}\inum{x}_j
=\sum_{j\not=i}\inum{a}_{ij}\Mag(\inum{x})_j.
$$
Therefore, the following incorporation of our operator seems the most effective: Compute  $\ivr{x}\supseteq\Ss$ by the interval Gauss--Seidel method, and then call one iteration of our operator.

\begin{example}
Let
$$
\imace{A}=\begin{pmatrix}
-[8,10]&[3,5]&[8,10]\\
{}-[5,7]&[0,2]&-[6,8]\\
{}[4,6]&[7,9]&-[5,7]
\end{pmatrix},
\quad
\ivr{b}=\begin{pmatrix}[3,5]\\{}[6,8]\\{}[5,7]\end{pmatrix},
$$
and consider the interval linear system $\imace{A}x=\ivr{b}$ preconditioned by the numerically computed inverse of $\Mid{A}$. The interval Gauss--Seidel method terminates in four iterations, yielding the enclosure
$$
\ivr{x}^1=([-1.2820,0.0174],\,[0.1847, 1.5641],\,[-1.0822, 0.0889])^T;
$$
it is not yet equal to the limit enclosure 
$$
\ivr{x}^{\textnormal{GS}}
=([-1.2813,0.0167],\,[0.1849, 1.5637],\,[-1.0821, 0.0887])^T
$$
due to the limit number of iterations. By  Proposition~\ref{propUD}, we obtain the following lower bounds
\begin{align*}
u &\geq (1.1633, 1.4367, 0.9788)^T,\\
d &\geq (1.2343, 1.2536, 1.2030)^T,
\end{align*}
whence we calculate
\begin{align*}
\gamma := (0.0387,  0.0396,  0.0366)^T.
\end{align*}
These values are quite conservative since the optimal values would be for $\gamma=\alpha$, where
\begin{align*}
\alpha = (0.0632, 0.0643, 0.0604)^T.
\end{align*}
Nevertheless, the computed $\gamma$ is sufficient to reduce the  overestimation of $\ivr{x}^1$. One iteration of our operator results in the tighter enclosure
$$
\ivr{x}^2=([-1.2820,-0.0258],\,[0.2261, 1.5641],\,[-1.0822, 0.0497])^T.
$$
For completeness, notice that the interval hull of the preconditioned system is
$$
\ivr{\Ss}=([-1.2813, -0.0549],\,[0.2571, 1.5637],\,[-1.0821, 0.0144])^T.
$$

\end{example}

\section{Magnitude method}

Property \nref{eqMag} and the analysis at the end of Section~\ref{ssCompGS} motivate us to compute enclosure to $\Ss$ along the following lines. First, we compute the magnitude of $\Ss$, that is, $u=\comp{A}^{-1}\Mag(\ivr{b})$, and then we apply one iteration of the presented operator on the initial box $\ivr{x}=[-u,u]$, producing
$$
\frac{\ivr{b}_i-\sum_{j\not=i}\inum{a}_{ij}u_j
 +[\gamma_i,-\gamma_i]u_i}
{\inum{a}_{ii}+\gamma_i[-1,1]},\quad i=1,\dots,n.
$$
Herein, the lower bound on $d$ is computed by Proposition~\ref{propUD}. In view of the proof of Theorem~\ref{thmOper}, we can express the result equivalently as \nref{inclNK}, but in that formula, an upper bound on $d$ is required, so we do not consider it here.
Instead, we reformulate it the slightly simpler form omitting improper intervals:
$$
\frac{\ivr{b}_i+(\sum_{j\not=i}\Rad{a_{ij}}u_j-\gamma_iu_i)[-1,1]}
{\inum{a}_{ii}+\gamma_i[-1,1]},\quad i=1,\dots,n.
$$

Algorithm~\ref{algMagMet} gives a detailed and numerically reliable description on the method.

\begin{alg}\label{algMagMet}\mbox{}
\begin{enumerate}
\item\label{step1}
Compute $\ivr{u}$, an enclosure to the solution of $\comp{A}u=\Mag(\ivr{b})$.
\item\label{step2}
Calculate $\unum{d}$, a lower bound on $d$ by Proposition~\ref{propUD}.
\item
Evaluate
\begin{align*}
\ivr{x}^*_i:=
\frac{\ivr{b}_i
 +(\sum_{j\not=i}\Rad{a_{ij}}\onum{u}_j-\gamma_i\unum{u}_i)[-1,1]}
{\inum{a}_{ii}+\gamma_i[-1,1]},
\quad i=1,\dots,n,
\end{align*}
where $\gamma_i:=\comp{a_{ii}}-1/\unum{d}_i$.
\end{enumerate}
\end{alg}

\subsection{Properties}

First, not that the computations of $u$ and $d$ in steps~\ref{step1} and~\ref{step2} are independent, so may be parallelized.

Now, let us compare the magnitude method with the Hansen--Bliek--Rohn and the interval Gauss--Seidel method. The propositions below shows, that the magnitude method is superior to the interval Gauss--Seidel method, and it gives the best possible enclosure as long as  $u$ and $d$ are determined exactly. Since $u$ is computed tightly, the possible deficiency is caused only by an underestimation of $d$.

\begin{proposition}
If $u$ and $d$ are calculated exactly, then $\ivr{x}^*=\ivr{\Ss}$.
\end{proposition}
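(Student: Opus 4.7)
The plan is to show that, when $u$ and $d$ are exact, the formula produced by the magnitude method coincides termwise with the Ning--Kearfott expression \nref{inclNK} from Theorem~\ref{thmNK}, which equals $\ivr{\Ss}_i$. So the proof reduces to verifying that the numerators and denominators of the two rational expressions agree coordinate by coordinate.

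First I would handle the denominators. By assumption, $\unum{d}_i = d_i$, so
$\gamma_i = \comp{a_{ii}} - 1/\unum{d}_i = \comp{a_{ii}} - 1/d_i = \alpha_i$, which immediately matches the denominator in \nref{inclNK}.

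Next I would attack the numerators, where the key identity I need is
\begin{equation*}
\sum_{j\neq i}\Rad{a_{ij}} u_j - \alpha_i u_i \;=\; u_i/d_i - \Mag(\ivr{b}_i).
\end{equation*}
To get this, I would read off the $i$th row of the defining system $\comp{A}\,u = \Mag(\ivr{b})$. Since $\Mid{A} = I_n$, the off-diagonal intervals $\inum{a}_{ij}$ are symmetric, hence $\Mag(\inum{a}_{ij}) = \Rad{a_{ij}}$, so the $i$th row reads $\comp{a_{ii}} u_i - \sum_{j\neq i}\Rad{a_{ij}} u_j = \Mag(\ivr{b}_i)$. Substituting $\alpha_i = \comp{a_{ii}} - 1/d_i$ and rearranging yields the displayed identity. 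Since $\onum{u}_j = \unum{u}_j = u_j$ under the exactness hypothesis, this shows the numerator in Algorithm~\ref{algMagMet} equals the numerator in \nref{inclNK}, completing the proof.

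I do not anticipate any real obstacle: the whole content is an algebraic identity coming from $\comp{A}u = \Mag(\ivr{b})$ together with the assumption $\Mid{A} = I_n$ that forces the off-diagonal magnitudes and radii to coincide. The only thing to be slightly careful about is that the denominator interval $\inum{a}_{ii} + \alpha_i[-1,1]$ is positive (so that interval division is unambiguous and monotone in the numerator), which was already observed in the proof of Theorem~\ref{thmOper}.
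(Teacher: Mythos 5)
Your proof is correct and follows essentially the same route as the paper: the paper's one-line proof defers to the computation inside the proof of Theorem~\ref{thmOper}, which for $\gamma_i=\alpha_i$ and $\ivr{x}=[-u,u]$ establishes exactly the identity you re-derive from the $i$th row of $\comp{A}u=\Mag(\ivr{b})$, turning the magnitude-method expression into the Ning--Kearfott formula \nref{inclNK}. Your explicit remarks that exactness of $d$ forces $\gamma_i=\alpha_i$ and that the denominator is a positive interval are exactly the points the paper leaves implicit.
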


\begin{proof}
It follows from the proof of Theorem~\ref{thmOper}.
\end{proof}


\begin{proposition}
We have $\ivr{x}^*\subseteq\ivr{x}^{\textnormal{GS}}$. If $\gamma=0$, then there is equality.
\end{proposition}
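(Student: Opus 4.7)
The plan is to reduce the claimed inclusion to the coordinate-wise comparison already carried out in Section~\ref{ssCompGS}. Observe that $\ivr{x}^*$ is, by construction, one iteration of the operator of Theorem~\ref{thmOper} applied to the symmetric initial box $\ivr{x}=[-u,u]$ with $\hat u = u$. On the Gauss--Seidel side, I would first check that a single step of $\ivr{x}\mapsto\imace{D}^{-1}(\ivr{b}-\imace{A}'\ivr{x})$ starting from $[-u,u]$ already equals the closed-form expression $\bigl(\ivr{b}_i+\sum_{j\neq i}\Rad{a_{ij}}u_j[-1,1]\bigr)/\inum{a}_{ii}$ for $\ivr{x}^{\textnormal{GS}}_i$ recalled earlier; this follows at once because both $[-u,u]$ and the off-diagonal entries $\inum{a}_{ij}$ are symmetric about zero. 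Hence the two operators are being compared on the same enclosure $[-u,u]$, which is exactly the setting analyzed in Section~\ref{ssCompGS}.

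That analysis reduces the inclusion at coordinate $i$ to the endpoint inequalities $\unum{v}_i\leq\onum{a}_{ii}u_i$ and $-\unum{v}_i\leq\unum{a}_{ii}u_i$ together with their right-endpoint counterparts, where $\inum{v}_i:=\ivr{b}_i-\sum_{j\neq i}\inum{a}_{ij}\inum{x}_j$. The decisive step is to compute $\Mag(\inum{v}_i)$ for $\ivr{x}=[-u,u]$. By the symmetry noted above, $\inum{v}_i$ collapses to $\ivr{b}_i+s_i[-1,1]$ with $s_i:=\sum_{j\neq i}\Rad{a_{ij}}u_j\geq 0$, so $\Mag(\inum{v}_i)=\Mag(\ivr{b}_i)+s_i$. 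The defining identity $\comp{A}u=\Mag(\ivr{b})$, together with the fact that $\comp{A}$ has $-\Rad{a_{ij}}$ as its off-diagonal entries and $\comp{a_{ii}}=\unum{a}_{ii}$ (which holds because $\Rad{a_{ii}}<1$), then rewrites this as $\Mag(\inum{v}_i)=\comp{a_{ii}}u_i=\unum{a}_{ii}u_i$. Both required inequalities follow from $|\unum{v}_i|,|\onum{v}_i|\leq\Mag(\inum{v}_i)=\unum{a}_{ii}u_i\leq\onum{a}_{ii}u_i$.

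For the equality statement, substituting $\gamma_i=0$ into the formula for $\ivr{x}^*_i$ collapses its denominator to $\inum{a}_{ii}$ and its numerator to $\ivr{b}_i+\sum_{j\neq i}\Rad{a_{ij}}u_j[-1,1]$, which is exactly $\ivr{x}^{\textnormal{GS}}_i$. I expect the delicate point of the proof to be the bookkeeping around the comparison matrix --- checking that under our normalization $\comp{a_{ii}}$ really coincides with $\unum{a}_{ii}$ and that the off-diagonal entries of $\comp{A}$ carry the correct sign --- rather than any new interval inequality: once $\Mag(\inum{v}_i)=\unum{a}_{ii}u_i$ is in hand, the case analysis from Section~\ref{ssCompGS} applies verbatim with $\hat u = u$.
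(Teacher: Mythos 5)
Your proof is correct and follows essentially the same route as the paper's: reduce to the coordinate-wise left/right endpoint comparison of $\bigl(\inum{v}_i+[\gamma_i,-\gamma_i]u_i\bigr)/\bigl(\inum{a}_{ii}+\gamma_i[-1,1]\bigr)$ against $\inum{v}_i/\inum{a}_{ii}$ with $\hat u=u$, exactly as in Section~\ref{ssCompGS}. Your explicit derivation of $\Mag(\inum{v}_i)=\comp{a_{ii}}u_i=\unum{a}_{ii}u_i$ from $\comp{A}u=\Mag(\ivr{b})$ is a welcome addition, since it is precisely the fact the paper's case analysis uses (to get $\unum{v}_i\leq\onum{a}_{ii}u_i$ and $-\unum{v}_i\leq\unum{a}_{ii}u_i$) without spelling it out.
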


\begin{proof}
Let $i\in\seznam{n}$ and without loss of generality assume that $\Mid{\Ss_i}\geq0$. Then
\begin{align*}
\ivr{x}^*_i&=
 \frac{\inum{b}_i-\sum_{j\not=i}\inum{a}_{ij}u_j+[\gamma_i,-\gamma_i]u_i}
 {\inum{a}_{ii}+\gamma_i[-1,1]},\\
\ivr{x}^{\textnormal{GS}}_i
 &=\frac{\ivr{b}_i-(\imace{A}'[-u,u])_i}{\inum{a}_{ii}}
 =\frac{\ivr{b}_i-\sum_{j\not=i}\inum{a}_{ij}u_j}{\inum{a}_{ii}}.
\end{align*}
Denoting $\inum{v}_i:=\ivr{b}_i-\sum_{j\not=i}\inum{a}_{ij}u_j$, we can rewrite it as
\begin{align*}
\ivr{x}^*_i&=\frac{\inum{v_i}+[\gamma_i,-\gamma_i]u_i}
 {\inum{a}_{ii}+\gamma_i[-1,1]},\\
\ivr{x}^{\textnormal{GS}}_i
 & =\frac{\inum{v}_i}{\inum{a}_{ii}}.
\end{align*}

By the assumption, $\ovr{x}^*_i=\ovr{x}^{\textnormal{GS}}_i=u_i$, so we have to compare the left endpoints of $\ivr{x}^*_i$ and $\ivr{x}^{\textnormal{GS}}$ only. We distinguish three cases:

1) Let $\unum{v}_i\geq0$. Then we want to show that 
$$
\frac{\unum{v}_i}{\onum{a}_{ii}}\leq
 \frac{\unum{v}_i+\gamma_iu_i}{\onum{a}_{ii}+\gamma_i}.
$$
This is simplified to 
$$
\unum{v}_i\gamma_i\leq \onum{a}_{ii}u_i\gamma_i.
$$
If $\gamma_i=0$, then it holds as equation, otherwise for any $\gamma_i>0$ it is true as well.

2) Let $\unum{v}_i<0$ and $\unum{v}_i+\gamma_iu_i\geq0$. Then the statement is obvious.

3) Let $\unum{v}_i+\gamma_iu_i<0$. Then we want to show that 
$$
\frac{\unum{v}_i}{\unum{a}_{ii}}\leq
 \frac{\unum{v}_i+\gamma_iu_i}{\unum{a}_{ii}-\gamma_i}.
$$
This is simplified to 
$$
-\unum{v}_i\gamma_i\leq \unum{a}_{ii}u_i\gamma_i.
$$
This is true for any $\gamma_i\geq0$, too.
\end{proof}

\subsection{Numerical examples}

\begin{example}\label{ex2D}
Consider the interval linear system $\imace{A}x=\ivr{b}$, with
$$
\imace{A}=\begin{pmatrix}-[2,4]&[8,10]\\{}[2,4]&[4,6]\end{pmatrix},
\quad
\ivr{b}=\begin{pmatrix}-[4,6]\\{}-[8,10]\end{pmatrix}.
$$
\begin{figure}[t]\label{fig2D}
\begin{center}
\psset{unit=2.4cm,arrowscale=1.5}
\begin{pspicture}(-4.1,-2.2)(1.4,0.5)
\newgray{mygray}{0.85}
\pspolygon[fillstyle=solid,fillcolor=mygray,linewidth=1pt]
(-0.5806,-0.6774)(-0.4,-1.4)(-3.4545, -1.9091)(-2.7059, -0.4118)(-0.5806,-0.6774)
\pspolygon[fillstyle=solid,fillcolor=lightgray,linewidth=0.5pt]
(-3,-1)(-1.75,-1.625)(-0.5,-1)(-1.0769,-0.6154)(-3,-1)
\psline[linewidth=1pt](-3.4985, -1.9279)
(-3.4985, -0.0721)(0.8318,-0.0721)(0.8318,-1.9279)(-3.4985, -1.9279)
\psline[linewidth=1pt](-3.4555, -1.9093)
(-3.4555,-0.3180)(-0.2722,-0.3180)(-0.2722,-1.9093)(-3.4555,-1.9093)
\psline[linewidth=1pt](-3.4546, -1.9091)
(-3.4546,-0.3741)(-0.3557,-0.3741)(-0.3557,-1.9091)(-3.4546,-1.9091)
\psline[linewidth=1pt](-3.4546, -1.9091)
(-3.4546,-0.3181)(-0.2727,-0.3181)(-0.2727,-1.9091)(-3.4546,-1.9091)
\psaxes[ticksize=2pt,labels=all,ticks=all, linewidth=0.5pt]{->}(0,0)(-4,-2.2)(1.3,0.4)
\uput[-135](-0.1,-0.1){$0$}
\uput[-90](1.3,0){$x_1$}
\uput[-180](0,0.4){$x_2$}
\end{pspicture}
\end{center}
\caption{(Example~\ref{ex2D}) The solution set in gray, the preconditioned one in light gray, and three enclosures for \texttt{verifylss}, the interval Gauss--Seidel and the magnitude method (from the largest).}
\end{figure}
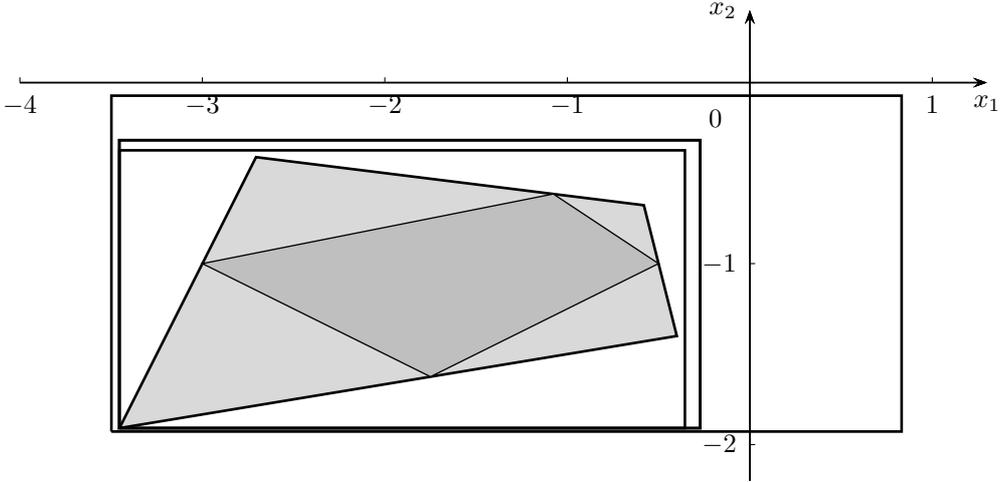
Figure~\ref{fig2D} depicts the solution set to $\imace{A}x=\ivr{b}$ in gray color, and the preconditioned system by $(\Mid{A})^{-1}$ in light gray. We compare three methods for enclosing the solution set. The function \texttt{verifylss} from the package  \textsf{INTLAB} \cite{Rum1999} yields the enclosure
$$
\ivr{x}^1=([ -3.4985,  0.8318],\,[ -1.9279, -0.0721])^T.
$$
The interval Gauss--Seidel method gives tighter enclosure
$$
\ivr{x}^2=([-3.4555,-0.2722],\,[-1.9093,-0.3180] )^T,
$$
but it requires almost double computational time. In contrast, our magnitude method produces yet a bit tighter enclosure
$$
\ivr{x}^*=([ -3.4546, -0.3557],\,[-1.9091, -0.3741])^T,
$$
but with less computational effort than the other methods. The enclosure is also very close to the optimal one (for the preconditioned system)
$$
\ivr{\Ss}=([-3.4546,-0.3999],\,[-1.9091, -0.4117])^T.
$$
Enclosures $\ivr{x}^1$, $\ivr{x}^2$, $\ivr{x}^*$ are illustrated in Figure~\ref{fig2D} respectively in a nested way.
\end{example}

In the example below, we present a limited computational study.

\begin{example}\label{exNumer}
We considered randomly generated examples for various dimensions and interval radii. The entries of $\Mid{A}$ and $\Mid{b}$ were generated randomly in $[-10,10]$ with uniform distribution. All radii of $\imace{A}$ were equal to the parameter $\delta>0$.

The computations  were carried out in \textsf{MATLAB 7.11.0.584 (R2010b)} on a six-processor machine \texttt{AMD Phenom(tm) II X6 1090T Processor}, CPU 800~MHz, with 15579\,MB RAM. Interval arithmetics and some basic interval functions were provided by the interval toolbox \textsf{INTLAB v6} \cite{Rum1999}. 

We compared four methods with respect to computational time and tightness of resulting enclosures. Namely, \texttt{verifylss} function from the \textsf{INTLAB}, the interval Gauss--Seidel method, the proposed magnitude method (Algorithm~\ref{algMagMet}), and eventually the magnitude method with $\gamma=0$. The last one yields the limit Gauss--Seidel enclosure, and it is faster than the magnitude method since we need not compute a lower bound on $d$.

Table~\ref{tabNumerTime} shows the running times in seconds, and Table~\ref{tabNumerTight} shows the tightness for the same data. The tightness was  measured by the sum of the resulting interval radii with respect to the optimal interval hull $\ivr{\Ss}$ computed by the Ning--Kearfott formula \nref{inclNK}. Precisely, we display
\begin{align*}
\frac{\sum_{i=1}^n\Rad{x_i}}{\sum_{i=1}^n\Rad{\Ss_i}},
\end{align*}
where $\ivr{x}$ is the calculated enclosure. Thus, the closer to 1, the sharper enclosure.

\begin{table}[t]
\caption{(Example~\ref{exNumer}) Computational time for randomly generated data.\label{tabNumerTime}}
\begin{center}
\tabcolsep=5pt
\begin{tabular}{rlllll}
 \toprule
$n$ & $\delta$ & \texttt{verifylss} &  \texttt{Gauss-Seidel} &  \texttt{magnitude}&  \texttt{magnitude ($\gamma=0$)}\\
\midrule 
  5 & 1    & 3.2903   & 0.10987 & 0.004466 & 0.003429\\
  5 & 0.1  & 0.004234 & 0.02937 & 0.004513 & 0.003502\\
  5 & 0.01 & 0.002342 & 0.02500 & 0.004473 & 0.003456\\
 10 & 0.1  & 0.018845 & 0.08370 & 0.004877 & 0.003777\\
 10 & 0.01 & 0.003161 & 0.05305 & 0.004821 & 0.003799\\
 15 & 0.1  & 0.246779 & 0.21868 & 0.005212 & 0.004162\\
 15 & 0.01 & 0.005403 & 0.09163 & 0.005260 & 0.004172\\
 20 & 0.1  & 16.9678  & 0.95238 & 0.005554 & 0.004251\\
 20 & 0.01 & 0.008950 & 0.15602 & 0.005736 & 0.004622\\
 30 & 0.01 & 0.019111 & 0.32294 & 0.006457 & 0.005289\\
 30 & 0.001& 0.004488 & 0.19544 & 0.006460 & 0.005260\\
 50 & 0.01 & 0.210430 & 1.01155 & 0.008483 & 0.007062\\
 50 & 0.001& 0.010190 & 0.54813 & 0.008343 & 0.006879\\
100 & 0.001& 0.044463 & 2.42025 & 0.016706 & 0.014645\\
100 &0.0001& 0.013940 & 1.48693 & 0.017089 & 0.014847\\
\bottomrule
\end{tabular}
\end{center}
\end{table}

\begin{table}[t]
\caption{(Example~\ref{exNumer}) Tightness of enclosures for randomly generated data.\label{tabNumerTight}}
\begin{center}
\tabcolsep=5pt
\begin{tabular}{rlllll}
 \toprule
$n$ & $\delta$ & \texttt{verifylss} &  \texttt{Gauss-Seidel} &  \texttt{magnitude}&  \texttt{magnitude ($\gamma=0$)}\\
\midrule 
  5 & 1    & 1.1520  & 1.1510  & 1.09548  & 1.1196\\
  5 & 0.1  & 1.08302 & 1.01645 & 1.00591  & 1.0164\\
  5 & 0.01 & 1.01755 & 1.00148 & 1.00037  & 1.00148\\
 10 & 0.1  & 1.07756 & 1.02495 & 1.01107  & 1.02474\\
 10 & 0.01 & 1.02362 & 1.00378 & 1.00132  & 1.00378\\
 15 & 0.1  & 1.06994 & 1.03121 & 1.01755  & 1.03074\\
 15 & 0.01 & 1.02125 & 1.00217 & 1.00047  & 1.00216\\
 20 & 0.1  & 1.05524 & 1.03076 & 1.02007  & 1.02989\\
 20 & 0.01 & 1.02643 & 1.00348 & 1.00097  & 1.00348\\
 30 & 0.01 & 1.02539 & 1.00402 & 1.00129  & 1.00401 \\
 30 & 0.001& 1.00574 & 1.00026 & 1.000039 & 1.000256\\
 50 & 0.01 & 1.02688 & 1.00533 & 1.00226  & 1.00531\\
 50 & 0.001& 1.00902 & 1.00051 & 1.00011  & 1.00051\\
100 & 0.001& 1.01303 & 1.00057 & 1.00013  & 1.00057\\
100 &0.0001& 1.0024988 & 1.0000274 & 1.0000022 & 1.0000274\\
\bottomrule
\end{tabular}
\end{center}
\end{table}

The results of our experiments show that the magnitude method with $\gamma=0$ saves some time (about $10\%$ to $20\%$), but the loss in tightness may be larger. Compared to the interval Gauss--Seidel method, the magnitude method wins significantly both in time and tightness. Compared to \texttt{verifylss}, our approach produces tighter enclosures. Provided interval entries of the equation system are wide, the magnitude method is also cheaper; for narrow enough intervals, the situation is changed and \texttt{verifylss} needs less computational effort.

For both variants of the magnitude method, we used \texttt{verifylss} for computing a verified enclosure  of $u=\comp{A}^{-1}\Mag(\ivr{b})$ (step~\ref{step1} of Algorithm~\ref{algMagMet}). So it might seem curious that (for wide input intervals) \texttt{verifylss} beats itself.
\end{example}

\section{Conclusion}

We proposed a new operator for tightening solution set enclosures of interval linear equations. Based on this operator and a property of limit enclosures of classical methods, we came up with a new algorithm, called the magnitude method. It always outperforms the interval Gauss--Seidel method. Numerical experiments indicate that it is efficient in both computational time and tightness of enclosures, in particular for wide interval entries. 

In the future research, we would like to extend our approach to parametric interval systems. Also, overcoming the assumption $\Mid{A}=I_n$ and considering non-preconditioned systems is a challenging problem.
Very recently, a new version of \textsf{INTLAB} was released (unfortunately, no longer free of charge), so numerical studies utilizing enhanced \textsf{INTLAB} functions would be of interest, too.

\subsubsection*{Acknowledgments.} 

The author was supported by the Czech Science Foundation Grant P402/13-10660S.


\bibliographystyle{abbrv}
\bibliography{iter_hbr}

\end{document}